\theoremstyle{thmit} 
\newtheorem{mthm}{Theorem}
\newtheorem{thm}{Theorem}[section]
\newtheorem{lemma}[thm]{Lemma}
\theoremstyle{thmrm} 
\newtheorem{example}[thm]{Example}
\newtheorem*{remark*}{Remark}
\newtheorem*{oldproof}{Proof}
\renewenvironment{proof}[1][{}]{\begin{oldproof}[#1]}{\qed\end{oldproof}}
\let\rk\relax\DeclareMathOperator{\rk}{\mathrm{rk}}
\DeclareMathOperator{\Span}{\mathrm{span}}
\newcommand{\CC}{\mathds{C}}
\newcommand{\RR}{\mathds{R}}
\newcommand{\ZZ}{\mathds{Z}}
\newcommand{\ad}{\mathrm{ad}}
\newcommand{\Ad}{\mathrm{Ad}}
\renewcommand{\phi}{\varphi}
\renewcommand{\epsilon}{\varepsilon}
\newcommand{\group}{\mathrm}
\newcommand{\GL}{\group{GL}}
\newcommand{\Hol}{\mathrm{Hol}}
\newcommand{\Gtwo}{\group{G}_{2(2)}}
\newcommand{\G}{\group{G}}
\renewcommand{\rho}{\varrho}
\newcommand{\frg}{\mathfrak{g}}
\newcommand{\frh}{\mathfrak{h}}
\newcommand{\frb}{\mathfrak{b}}
\newcommand{\frn}{\mathfrak{n}}
\newcommand{\frnI}{\mathfrak{n}_{\text{\sc i}}}
\newcommand{\frnII}{\mathfrak{n}_{\text{\sc ii}}}
\newcommand{\frnIII}{\mathfrak{n}_{\text{\sc iii}}}
\renewcommand{\frm}{\mathfrak{m}}
\newcommand{\frw}{\mathfrak{w}}
\newcommand{\frj}{\mathfrak{j}}
\newcommand{\zen}{\mathfrak{z}}
\newcommand{\sso}{\mathfrak{so}}
\newcommand{\der}{\mathfrak{der}}
\newcommand{\zsp}{\mathbf{0}}
\newcommand{\met}{\langle\cdot,\cdot\rangle}
\newcommand{\tensor}{\mathsf}
\newcommand{\g}{\tensor{g}}
\newcommand{\R}{\tensor{R}}
\DeclareMathOperator{\im}{\mathrm{im}}
\newcommand{\Tan}{\mathrm{T}}
\numberwithin{equation}{section}
\title[On compact homogeneous $\Gtwo$-manifolds]{On compact homogeneous $\boldsymbol{\Gtwo}$-manifolds}
\author{Wolfgang Globke}
\address{Wolfgang Globke\\Faculty of Mathematics\\Oskar-Morgenstern-Platz 1\\Universit\"at Wien\\1090 Vienna\\Austria\\{\sf wolfgang.globke@univie.ac.at}}
\thanks{Wolfgang Globke is supported by the Austrian Science Fund FWF grant I 3248.
}
\begin{document}


\maketitle

\begin{abstract}
We prove that among all compact homogeneous spaces for an effective
transitive action of a Lie group whose Levi subgroup has no compact
simple factors,
the seven-dimensional flat torus is the only one that admits an
invariant torsion-free $\Gtwo$-structure.
\end{abstract}




\section{Introduction and main results}
\label{sec:intro}

Let $G$ be a connected Lie group whose Levi subgroup has no compact
simple factors.
Let $M$ be a seven-dimensional connected compact homogeneous space for
$G$, by which we mean $M=G/H$ for some closed uniform subgroup $H$ of
$G$, and $G$ acts effectively on $M$.
Assume now that there exists a $G$-invariant torsion-free
$\Gtwo$-structure $(\g_M,\phi_M)$ on $M$,
where $\Gtwo$ denotes the split real form of the complex exceptional
Lie group $\G_2^{\CC}$.
This means $\g_M$ is a pseudo-Riemannian metric of signature $(4,3)$,
and $\phi_M$ is a certain parallel three-form whose stabilizer on
each tangent space is the group $\Gtwo$.

In this article we will show the following:

\begin{mthm}\label{mthm:flat}
Let $M$ be a compact seven-dimensional homogeneous space for a Lie
group $G$ whose Levi subgroup does not have compact simple factors.
If $M$ has a $G$-invariant torsion-free $\Gtwo$-structure, then
$G=M=\RR^7_3/\ZZ^7$ is a flat torus.
In particular, the holonomy group of $M$ is trivial.
\end{mthm}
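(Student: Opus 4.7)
The plan is to split the proof into three essentially independent steps. In \emph{Step 1}, I would show that any torsion-free $\Gtwo$-structure in signature $(4,3)$ produces a Ricci-flat metric. This is the exact analogue of the classical Riemannian $\G_2$ fact and follows by the same argument: decomposing the Riemann tensor as a $\Gtwo$-equivariant section of $S^2(\sso(4,3))$ and invoking the first Bianchi identity, one sees that the $\Gtwo$-decomposition of $S^2(\sso(4,3))$ contains no component corresponding to the Ricci tensor, so $\Ric(\g_M)=0$. Since the compact and split real forms of $\G_2^{\CC}$ have identical complexified representation theory, Joyce's Riemannian computation transfers verbatim.

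In \emph{Step 2}, I would invoke a rigidity theorem for compact pseudo-Riemannian homogeneous Einstein manifolds in the spirit of the Baues--Globke program. Under the hypothesis that the Levi subgroup of $G$ has no compact simple factors, such a theorem should let me conclude that $G$ is virtually solvable, that $M$ is finitely covered by a compact quotient of a nilpotent Lie group, and that on such models Ricci-flat forces flat. This is where the assumption on the Levi subgroup does the real work.

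In \emph{Step 3}, once $\g_M$ is flat, the developing map identifies the universal cover of $M$ with pseudo-Euclidean $\RR^{4,3}$ and $\pi_1(M)$ with a crystallographic subgroup $\Gamma\subset\Iso(\RR^{4,3})$. The transitive effective action of $G$ lifts to an action of its simply connected cover by affine isometries normalizing $\Gamma$, and a Bieberbach-type argument in this signature forces $G$ to be the abelian translation group generated by $\Gamma$, giving $M=\RR^{4,3}/\ZZ^7$ and trivial holonomy.

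I expect the second step to be the main obstacle. In indefinite signature there exist Ricci-flat but non-flat compact homogeneous metrics -- for instance on Lorentzian oscillator-type solvmanifolds -- so the passage from Ricci-flat to flat is not a formal consequence. It will require combining a genuine structural result on which solvable groups admit compact homogeneous pseudo-Riemannian spaces at all with representation-theoretic information specific to $\Gtwo$: notably that $\Gtwo$ acts irreducibly on $\RR^{4,3}$, which rules out parallel null distributions of intermediate rank. Getting both inputs to interact correctly is likely the technical heart of the paper.
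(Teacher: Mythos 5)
Your Step 1 matches the paper (which cites Bonan for Ricci-flatness), and the reduction in Step 2 to a compact quotient of a nilpotent Lie group with bi-invariant metric is also how the paper proceeds (via Baues--Globke--Zeghib to get a bi-invariant metric and a uniform lattice, then vanishing of the Killing form plus Globke--Nikolayevsky to get nilpotence). But there is a genuine gap exactly where you predicted one: the passage from ``Ricci-flat nilmanifold with bi-invariant metric'' to ``flat.'' This is simply false without further input: any non-abelian nilpotent $G$ with bi-invariant metric has zero Killing form, hence is Ricci-flat, yet has curvature $\R(X,Y)Z=\tfrac14[[X,Y],Z]\neq0$; compact examples in signature $(4,3)$ exist (e.g.\ built from the algebras $\frnI$, $\frnII$, $\frnIII$ of Kath's classification). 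The mechanism you propose to close the gap --- that $\Gtwo$ acts irreducibly on $\RR^{4,3}$ and so rules out parallel null distributions --- does not work, because the holonomy group here is only $\Ad([G,G])$, a proper nilpotent subgroup of $\Gtwo$, and such subgroups do preserve degenerate distributions: Kath's indefinite symmetric spaces with $\Gtwo$-structure are precisely nilpotent Lie groups with bi-invariant metric and three-dimensional abelian holonomy inside $\Gtwo$. Irreducibility of the full group $\Gtwo$ gives you nothing about its reducible subgroups.

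What actually closes the gap in the paper is a different, stronger constraint plus a hands-on computation. First, compactness gives a lattice $\Gamma$ whose image $\Ad_\frg(\Gamma)$ is Zariski-dense in $\Ad_\frg(G)$, so invariance of the three-form under $\Gamma$ upgrades to $\Ad_\frg(G)\subseteq\Gtwo$, i.e.\ the \emph{entire} adjoint algebra $\ad(\frg)$, not just the holonomy algebra, must sit inside $\frg_{2(2)}$. Second, one classifies the possible seven-dimensional non-abelian nilpotent metric Lie algebras (via Kath) and shows, by an explicit rank argument in a maximal strictly triangular subalgebra of $\frg_{2(2)}$, that none of their adjoint algebras embeds: each would require a three-dimensional abelian subalgebra consisting entirely of rank-two matrices, and no such subalgebra exists. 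Abelianness of $\frg$ then follows, and your Step 3 becomes unnecessary: for abelian $G$ acting effectively on $G/\Gamma$ the lattice must be trivial, so $G=M$ is the torus $\RR^7_3/\ZZ^7$ directly, with no Bieberbach-type argument needed (which is fortunate, since Bieberbach's theorems are delicate in indefinite signature). Without some substitute for the Zariski-density step and the explicit $\frg_{2(2)}$ computation, your outline does not yield the theorem.
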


The proof of this theorem makes use of results by Baues, Globke and
Zeghib \cite{BGZ} to reduce it to the case of a Lie group with
bi-invariant metric. Then it follows from the next theorem:

\begin{mthm}\label{mthm:biinvariant}
Let $G$ be a seven-dimensional Lie group and $\Gamma$ a uniform lattice
in $G$.
If $M=G/\,\Gamma$ has a $G$-invariant torsion-free $\Gtwo$-structure whose associated metric pulls back to a bi-invariant metric on $G$, then
$G=M=\RR^7_3/\ZZ^7$ is a flat torus.
In particular, the holonomy group of $M$ is trivial.
\end{mthm}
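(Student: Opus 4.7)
First I would transcribe the geometric hypotheses into algebraic conditions on $\frg$. Since $\g$ is bi-invariant, its Levi-Civita connection on left-invariant vector fields is $\nabla_X Y = \tfrac12 [X,Y]$, so for the left-invariant $3$-form $\phi$ the torsion-free condition $\nabla\phi = 0$ becomes the algebraic statement $\ad_X\cdot\phi = 0$ for every $X \in \frg$. Together with the skew-symmetry of $\ad_X$ with respect to $\g$ (which follows from bi-invariance), this says precisely that $\ad_X$ lies in the Lie algebra of $\Gtwo$ for every $X$, so that the adjoint representation yields a Lie algebra homomorphism $\ad\colon\frg\to\mathfrak{g}_{2(2)}$.

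Next I would exploit Ricci-flatness. For a bi-invariant metric one computes $\R(X,Y)Z = -\tfrac14 [[X,Y],Z]$ and hence $\Ric_\g = -\tfrac14 B_\frg$, where $B_\frg$ is the Killing form of $\frg$. A torsion-free $\Gtwo$-structure is automatically Ricci-flat (every $\mathfrak{g}_{2(2)}$-valued algebraic curvature tensor has vanishing Ricci contraction), so $B_\frg \equiv 0$, and Cartan's solvability criterion gives that $\frg$ is solvable. Consequently $\ad(\frg)\cong\frg/\frz(\frg)$ is a solvable subalgebra of the simple Lie algebra $\mathfrak{g}_{2(2)}$ of dimension at most $7$.

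The main obstacle is to show $\ad(\frg) = 0$, i.e.\ that $\frg$ is abelian. The situation is rigidly constrained: $\ad(\frg)$ is contained in a Borel subalgebra of $\mathfrak{g}_{2(2)}$ (of dimension $8$), the bracket $\Ext^2\frg\to\frg$ is $\ad(\frg)$-equivariant, and each $\ad_X$ must act as a derivation of the cross product $X\times Y$ determined by $\g(X\times Y,Z) = \phi(X,Y,Z)$. I would analyse the possible solvable subalgebras of $\mathfrak{g}_{2(2)}$ via the root datum of $\Gtwo$, identify the $\ad(\frg)$-invariant subspaces of the $7$-dimensional representation, and in each case use the compatibility of the bracket with $\phi$ to rule out any non-trivial bracket; this case analysis is where I expect the real work to concentrate. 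Once $\frg$ is abelian, the simply connected group with Lie algebra $\frg$ is $\RR^7$, and the existence of the uniform lattice $\Gamma$ together with the flatness of the bi-invariant metric gives $M \cong \RR^7_3/\ZZ^7$ as a flat torus with trivial holonomy.
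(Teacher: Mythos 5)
Your opening reduction is sound and in one respect cleaner than the paper's: from $\nabla_XY=\tfrac12[X,Y]$ you read off directly that $\nabla\phi=0$ forces $\ad_X\cdot\phi=0$, hence $\ad(\frg)\subseteq\frg_{2(2)}$, whereas the paper obtains the $\Ad(G)$-invariance of $\phi$ indirectly, via Zariski density of $\Ad(\Gamma)$ in the nilpotent group $\Ad(G)$. Likewise Ricci-flatness $\Rightarrow B_\frg=0\Rightarrow\frg$ solvable matches the first part of Lemma \ref{lem:dimG7}. But there are two genuine gaps. First, you stop at solvability. The paper upgrades this to \emph{nilpotence} by invoking the Globke--Nikolayevsky result that a solvable group acting transitively on a compact pseudo-Riemannian Einstein manifold of dimension at most seven is nilpotent --- this is precisely where the uniform lattice $\Gamma$ (compactness of $M$) enters, and your argument never uses it. Solvability alone does not suffice: there exist solvable non-nilpotent metric Lie algebras with vanishing Killing form (e.g.\ double extensions by a skew-symmetric semisimple derivation $D$ with $\operatorname{tr}(D^2)=0$), so your case analysis would have to range over all solvable subalgebras of a Borel of $\frg_{2(2)}$ rather than over strictly triangular ones, a substantially larger and less structured class.

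Second, and more importantly, the heart of the proof --- showing that no non-abelian seven-dimensional nilpotent metric Lie algebra of index three has $\ad(\frg)\subseteq\frg_{2(2)}$ --- is exactly the step you defer (``this case analysis is where I expect the real work to concentrate''), so the proposal does not actually contain a proof of the theorem. The paper carries this out by combining Kath's classification of indecomposable non-abelian nilpotent metric Lie algebras of dimension less than eight (there are only three) with an explicit matrix computation in the maximal nilpotent subalgebra $\frm$ of $\frg_{2(2)}$: each candidate forces $\ad(\frn)$ to contain a three-dimensional subspace consisting of rank-two matrices, and Lemmas \ref{lem:rank2} and \ref{lem:no_rank2} show that $\frm$ contains no such subspace. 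Your proposed route via the root datum of $\Gtwo$ and $\phi$-compatibility of the bracket is plausible in outline, but until it is executed the main obstacle remains open. (A minor point: to conclude $G=M$, rather than merely that $M$ is a flat torus, one also needs effectiveness of the $G$-action to force $\Gamma$ to be trivial.)
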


In Section \ref{sec:nilpotent} we use a result by Globke and
Nikolayevsky \cite{GN} to reduce the problem to nilpotent Lie groups.
The nilpotence of $G$ then implies that $\phi_M$ induces an
$\Ad(G)$-invariant three-form on the Lie algebra of $G$.
We then investigate in Section \ref{sec:liealgebra} the obstructions
for nilpotent Lie algebras with invariant scalar products to be
contained in $\frg_{2(2)}$.
Combining these results, we prove Theorems \ref{mthm:flat} and
\ref{mthm:biinvariant} in Section \ref{sec:proofs}.

Contrasting our theorems here, there are many examples of homogeneous
spaces for compact Lie groups that admit non-trivial invariant $\G_2$- or
$\Gtwo$-structures.
L\^e and Munir \cite{LM} established a classification of these
spaces.
In the Riemannian $\G_2$-case, this is a complete classification
of compact homogeneous $\G_2$-manifolds.
See the references in \cite{LM} for further related results.

Other existence results in a setting similar to ours were obtained,
for example, by Fino and Luj\'an \cite{FL}, who determine $\Gtwo$-structures on
compact nilmanifolds $\Gamma\,\backslash G$, or equi\-valently,
left-invariant $\Gtwo$-structures on seven-dimensional nilpotent Lie
groups $G$.
Note that, despite the authors calling these structures ``invariant'',
the metrics on these quotients are not $G$-invariant.
In fact, the additional assumption made in \cite{FL}
that the metric is definite on the center of $G$
prohibits the pulled-back metric on $G$ to be bi-invariant,
which is necessary for the metric on $\Gamma\,\backslash G$ to be
$G$-invariant (unless $G$ is abelian to begin with).

Another related result is the classification of indecomposable
indefinite symmetric spaces with $\Gtwo$-structures by Kath
\cite{kath2}.
All of these turn out to be non-compact quotients of nilpotent Lie
groups with a bi-invariant metric, and their holonomy groups
are three-dimensional and abelian.

\subsection*{Acknowledgement}
I would like to thank Thomas Leistner for helpful discussions
of holonomy groups, and also the anonymous referee for many helpful
suggestions that improved the readability of the paper.

\subsection*{Notations and conventions}
A Lie group or Lie algebra is called \emph{$k$-step nilpotent} if the
$k$th term in its descending central series is trivial, but the
$(k-1)$st term is not.

For a Lie algebra $\frg$ with subalgebra $\frh$ we let $\ad(\frg)$
denote the adjoint representation of $\frg$, and when clarity requires
it we write $\ad_\frg(\frh)$ for the adjoint action of $\frh$ on $\frg$
to distinguish it from the adjoint representation of $\frh$ on itself.
A similar notation is used for Lie groups.

A Lie algebra is called \emph{metric} if it has an invariant scalar
product. The vector space $\RR^n$, equipped with a scalar product of
signature $(n-s,s)$, is denoted by $\RR^n_s$.


\section{Reduction to nilpotent groups}
\label{sec:nilpotent}

In this section we will show that for our purposes it is sufficient
to consider nilpotent Lie groups.

\begin{lemma}\label{lem:Levi_noncompact}
Let $M$ be a compact homogeneous space for a Lie
group $G$ whose Levi subgroup does not have compact simple factors.
Suppose $G$ acts effectively on $M$.
If $M$ has a $G$-invariant pseudo-Riemannian metric,
then $M=G/\,\Gamma$ for some uniform lattice $\Gamma$ in $G$, and
in particular $\dim G=\dim M$.
Moreover, the pseudo-Riemannian metric
on $M$ is induced by a bi-invariant metric on $G$.
\end{lemma}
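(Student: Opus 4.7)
The plan is to quote the structure theorems of Baues--Globke--Zeghib \cite{BGZ} on compact homogeneous pseudo-Riemannian manifolds. Under the assumption that the Levi subgroup of $G$ has no compact simple factors, \cite{BGZ} establishes both claims of the lemma: that $H$ is discrete (hence a uniform lattice) and that the $G$-invariant metric on $M$ lifts to a bi-invariant metric on $G$. The purpose of the short proof below is to make this reduction explicit, and to point out where the Levi hypothesis enters.

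The first step is discreteness of $H$. The algebraic rigidity results of \cite{BGZ}, applied to the effective isometric action of $G$ on the compact pseudo-Riemannian space $M=G/H$, force the isotropy $H$ to be $0$-dimensional under the Levi hypothesis. Setting $\Gamma:=H$ then exhibits $M=G/\Gamma$ as a biquotient by a uniform lattice, and $\dim G=\dim M$ follows. In particular the projection $\pi\colon G\to G/\Gamma$ is a covering map.

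Once $\pi$ is a covering, the pull-back $\g:=\pi^*\g_M$ is a genuine left-$G$-invariant, right-$\Gamma$-invariant pseudo-Riemannian metric on $G$. Left-invariance identifies $\g$ with a scalar product $\met$ on $\frg$, and right-$\Gamma$-invariance becomes $\Ad(\Gamma)$-invariance of $\met$. Bi-invariance of $\g$ is equivalent to $\Ad(G)$-invariance of $\met$, so it remains to upgrade $\Ad(\Gamma)$-invariance to $\Ad(G)$-invariance. This is supplied by the Borel-density-type statement in \cite{BGZ}: under the Levi hypothesis, $\Ad(\Gamma)$ is Zariski dense in $\Ad(G)\subset\GL(\frg)$. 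Since the stabilizer of $\met$ in $\GL(\frg)$ is a real algebraic subgroup, $\Ad(\Gamma)$-invariance of $\met$ passes to $\Ad(G)$-invariance, and $\g$ is bi-invariant. The key obstacle is this density step: without the Levi hypothesis, a compact simple factor could allow $\Gamma$ to sit inside a proper Zariski-closed subgroup of $\Ad(G)$, in which case the lifted metric would be only right $\Gamma$-invariant rather than bi-invariant.
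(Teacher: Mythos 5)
Your overall plan (lean on \cite{BGZ}, conclude discreteness of $H$ and bi-invariance) matches the paper's, but the specific mechanism you use is different from theirs and contains a genuine gap.

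The paper's argument runs through a single statement: \cite[Theorem A]{BGZ} says that the pullback of $\g_M$ to $G$ induces an $\Ad(G)$-invariant symmetric bilinear form on $\frg$ whose kernel is $\frh=\Lie(H)$ --- note that this is proved \emph{before} one knows $H$ is discrete, for the possibly degenerate form. Invariance makes the kernel an ideal, and effectivity of the action forces that ideal to be zero; discreteness of $H$ and bi-invariance then drop out simultaneously. Your first step instead asserts that BGZ ``force the isotropy $H$ to be $0$-dimensional'' as a black box; the actual derivation needs exactly the ideal-plus-effectivity argument above, and you never bring effectivity to bear concretely.

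The more serious problem is your second step. The ``Borel-density-type statement'' you invoke --- that under the Levi hypothesis $\Ad(\Gamma)$ is Zariski dense in $\Ad(G)$ --- is false in general. Take $G=\tilde{\group{E}}(2)=\RR\ltimes\RR^2$ (universal cover of the Euclidean motion group, which is solvable, so the Levi hypothesis is vacuous) and $\Gamma=2\pi\ZZ\ltimes\ZZ^2$: the rotational one-parameter subgroup acts trivially on $\frg$ when restricted to $\Gamma$, so the Zariski closure of $\Ad(\Gamma)$ contains only the unipotent part $\Ad(\RR^2)$ and misses the rotations in $\Ad(G)$. (This example carries no invariant pseudo-Riemannian metric, so it does not contradict the lemma, but it does refute the density statement you rely on.) The correct content of \cite[Theorem A]{BGZ} is the $\Ad(G)$-invariance of the induced form itself --- a statement whose proof uses the metric, not merely the Levi hypothesis --- and it does not factor through Zariski density of $\Ad(\Gamma)$ in $\Ad(G)$. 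As written, your upgrade from $\Ad(\Gamma)$- to $\Ad(G)$-invariance rests on an incorrect intermediate claim and needs to be replaced by a direct appeal to the invariance statement of Theorem~A.
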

\begin{proof}
Write $M=G/H$ for a closed uniform subgroup $H$ of $G$.
Via pullback to $G$, $\g_M$ induces a symmetric bilinear form on the
Lie algebra of $G$ whose kernel is the Lie algebra of $H$.
Baues, Globke and Zeghib \cite[Theorem A]{BGZ} showed that
this bilinear form is $\Ad(G)$-invariant, so that its kernel is an ideal.
By the effectivity of the $G$-action, this ideal, and hence the
identity component of $H$, must be trivial.
This means $H$ is a uniform lattice in $G$, so that
$\dim G=\dim M$. By the invariance of the bilinear form
 on the Lie algebra, the metric on $M$ is induced by a bi-invariant
 metric on $G$.
\end{proof}

The existence of a torsion-free $\Gtwo$-structure $(\g_M,\phi_M)$ on a manifold $M$
means that the three-form $\phi_M$ is parallel and is thus preserved
by the holonomy group at every point $p\in M$.
The stabilizer of $(\phi_M)_p$ in $\GL(\Tan_p M)$ is $\Gtwo$,
which means $\Hol(\g_M)\subseteq\Gtwo$.
Moreover, the metric $\g_M$ of the $\Gtwo$-structure has vanishing
Ricci curvature (Bonan \cite{bonan}).

\begin{lemma}\label{lem:dimG7}
Let $G$ be a seven-dimensional connected Lie group with a bi-invariant metric $\g_G$,
and $\Gamma$ a uniform lattice in $G$.
Let $M=G/\,\Gamma$ and let $\g_M$ be the metric induced on $M$ by
$\g_G$.
If $\g_M$ is Ricci-flat, then:
\begin{enumerate}
\item
$G$ is nilpotent.
\item
The connected holonomy group of $\g_M$ is
$\Hol(\g_M)^\circ\cong\Ad_\frg([G,G])$.
\end{enumerate}
In particular, this holds if $\g_M$ is the metric of a torsion-free
$\Gtwo$-structure $(\g_M,\phi_M)$ on $M$, and in this case $\Ad_\frg(G)$
is a subgroup of $\Gtwo$.
\end{lemma}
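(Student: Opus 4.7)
The plan is to exploit the standard formulas for a bi-invariant pseudo-Riemannian metric. On left-invariant vector fields the Levi-Civita connection is $\nabla_X Y=\tfrac12[X,Y]$, a direct computation using the Jacobi identity gives the curvature tensor $R(X,Y)Z=-\tfrac14[[X,Y],Z]$, and taking a trace yields $\Ric=-\tfrac14 B$, where $B$ is the Killing form of $\frg$. Hence Ricci-flatness of $\g_M$ is equivalent to $B\equiv 0$ on $\frg$.

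For assertion \emph{(1)}, Cartan's criterion applied to $B\equiv 0$ only yields solvability of $\frg$, and the upgrade to nilpotency is the step I expect to be the main obstacle. This is precisely where the result of Globke and Nikolayevsky \cite{GN} enters: in the presence of the uniform lattice $\Gamma$ and a bi-invariant Ricci-flat metric, their structure theorem forces $G$ to be nilpotent.

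For assertion \emph{(2)}, a bi-invariant metric makes $G$ locally symmetric (geodesic inversion at the identity is an isometry), hence $\nabla R=0$. By the Ambrose--Singer theorem the holonomy algebra at any point of $G$ is therefore $\Span\{R(X,Y):X,Y\in\frg\}$, which by the curvature formula above coincides with $\ad_\frg([\frg,\frg])$. Since the projection $G\to M=G/\,\Gamma$ is a local isometry, the connected holonomy of $\g_M$ is the connected Lie subgroup of $\GL(\frg)$ with this Lie algebra, namely $\Ad_\frg([G,G])$.

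For the final assertion, assume $(\g_M,\phi_M)$ is a torsion-free $\Gtwo$-structure, and let $\phi_G$ denote the pullback of $\phi_M$ to $G$; it is left-invariant and satisfies $\nabla\phi_G=0$. Since $\nabla_X$ acts on left-invariant tensors as the derivation extension of $\tfrac12\ad X$, the condition $\nabla\phi_G=0$ is equivalent to $\ad X\cdot\phi_G=0$ for every $X\in\frg$. By connectedness of $G$ this means $\phi_G|_e$ is $\Ad_\frg(G)$-invariant, and because the stabilizer of $\phi_G|_e$ in $\GL(\frg)$ is $\Gtwo$ by definition of a $\Gtwo$-structure, we conclude $\Ad_\frg(G)\subseteq\Gtwo$.
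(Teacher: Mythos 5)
Your proof of parts (1) and (2) matches the paper's: the same reduction of Ricci-flatness to vanishing of the Killing form, the same appeal to Globke--Nikolayevsky for the upgrade from solvable to nilpotent (which is indeed the only non-elementary step there), and the same Ambrose--Singer/curvature-formula computation of the holonomy (the sign discrepancy in $\R(X,Y)Z=\pm\tfrac14[[X,Y],Z]$ is only a curvature convention and does not affect the span). Where you genuinely diverge is the final assertion. The paper does not use torsion-freeness at this point: it observes that $\phi_G$ is left-invariant and right-$\Gamma$-invariant, hence $\phi_\frg$ is $\Ad_\frg(\Gamma)$-invariant, and then invokes Zariski density of the lattice $\Ad_\frg(\Gamma)$ in the nilpotent group $\Ad_\frg(G)$ to conclude $\Ad_\frg(G)$-invariance. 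You instead use parallelism: for a left-invariant form, $\nabla\phi_G=0$ is equivalent to $\ad(X)\cdot\phi_G=0$ for all $X\in\frg$, giving $\Ad_\frg(G)$-invariance directly by connectedness. Both arguments are correct; yours is more direct and avoids the algebraic-group input, while the paper's shows the conclusion already follows from invariance of the $\Gtwo$-structure without using that it is torsion-free. Note that both arguments (yours and the paper's) rely on $\phi_G$ being left-invariant, which requires the $G$-invariance of $\phi_M$ assumed in the main theorems but not restated in the lemma --- so you are making the same implicit assumption the paper does, and it is harmless in context.
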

\begin{proof}
The bi-invariance of $\g_G$ implies that the Ricci tensor,
restricted to the left-invariant vector fields on $G$, is proportional
to the Killing form of $G$.
Since $\g_M$, and hence $\g_G$, is Ricci-flat, the Killing form of $G$
is zero, which means $G$ is solvable.
Moreover, the Ricci-flatness implies that $M$ is an Einstein manifold.
It was shown by Globke and Nikolayevsky \cite[proof of Theorem 1.3]{GN}
that a solvable Lie group acting transitively on a compact pseudo-Riemannian
Einstein mani\-fold of dimension less or equal to seven is nilpotent.
So $G$ is nilpotent.

As a Lie group with bi-invariant metric, the curvature tensor on $G$ is
given by $\R(X,Y)Z=\frac{1}{4}[[X,Y],Z]$ for any left-invariant
vector fields $X,Y,Z$ on $G$.
The Ambrose-Singer Theorem (cf.~Besse \cite[Theorem 10.58]{besse})
now implies that the connected holonomy group of $G$ is
$\Hol(\g_G)^\circ\cong\Ad_\frg([G,G])$.
Since $G$ is a covering space of $M$,
$\Hol(\g_G)^\circ\cong\Hol(\g_M)^\circ$ (cf.~Besse \cite[10.16]{besse}).

Suppose $(\g_M,\phi_M)$ is a torsion-free $\Gtwo$-structure on $M$.
Then $\g_M$ and $\g_G$ are Ricci-flat, so all of the above
applies.
The three-form $\phi_M$ pulls back to a left-invariant three-form
$\phi_G$ on $G$ that is right-invariant under $\Gamma$. This means
the induced three-form $\phi_{\frg}$ on $\frg$ is
$\Ad_\frg(\Gamma)$-invariant. But the lattice $\Ad_{\frg}(\Gamma)$ in
the nilpotent Lie group $\Ad_{\frg}(G)$ is Zariski-dense and
$\phi_{\frg}$ is a polynomial expression, hence $\phi_{\frg}$ is
$\Ad_\frg(G)$-invariant.
Since the real algebraic group $\Gtwo$ is the stabilizer of $\phi_{\frg}$,
this means $\Ad_\frg(G)$ is a subgroup of $\Gtwo$.
\end{proof}


\section{Nilpotent metric Lie algebras}
\label{sec:liealgebra}

%
\subsection{Nilpotent metric Lie algebras in low dimensions}

Let $\frn$ be a nilpotent Lie algebra with invariant scalar product
$\met$ of signature $(p,q)$.
Let $\zen(\frn)$ denote the center of $\frn$.
Set
\begin{equation}
\frj=\zen(\frn)\cap[\frn,\frn].
\label{eq:j}
\end{equation}
This is a totally isotropic ideal in $\frn$.
Also, $\frj^\perp$ is an ideal in $\frn$ that contains
$[\frn,\frn]$ and $\zen(\frn)$.
We have vector space decompositions
\begin{equation}
\frn = \frj^* \oplus \frj^\perp,
\quad
\frj^\perp = \frw\oplus\frj,
\label{eq:decomp}
\end{equation}
where $\frj^*$ is totally isotropic, dually paired with $\frj$ via
$\met$ and orthogonal to $\frw$,
and $\met$ has signature $(p-\dim\frj,q-\dim\frj)$ on $\frw$.

The decomposition \eqref{eq:decomp} is helpful in understanding
the algebraic structure of Lie algebras with invariant scalar products.
The following is a simple yet very useful property of the ideal $\frj$.

\begin{lemma}\label{lem:abelian}
$\frn$ is abelian if and only if $\frj=\zsp$.
\end{lemma}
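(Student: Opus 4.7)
The plan is to observe that this lemma is really a purely structural fact about nilpotent Lie algebras; the invariant scalar product plays no role in the argument. The forward direction is trivial: if $\frn$ is abelian then $[\frn,\frn]=\zsp$, and so $\frj=\zen(\frn)\cap[\frn,\frn]=\zsp$.

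For the converse I would argue by contrapositive. Suppose $\frn$ is not abelian, so that $[\frn,\frn]\neq\zsp$. Consider the descending central series $\frn^{0}=\frn$, $\frn^{i+1}=[\frn,\frn^{i}]$, which terminates at $\zsp$ in finitely many steps because $\frn$ is nilpotent. Let $k\geq 1$ be the largest index with $\frn^{k}\neq\zsp$; such a $k$ exists because $\frn^{1}=[\frn,\frn]\neq\zsp$ by assumption. Then $[\frn,\frn^{k}]=\frn^{k+1}=\zsp$, so $\frn^{k}\subseteq\zen(\frn)$, while $\frn^{k}\subseteq\frn^{1}=[\frn,\frn]$ since $k\geq 1$. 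Combining these inclusions gives $\zsp\neq\frn^{k}\subseteq\zen(\frn)\cap[\frn,\frn]=\frj$, as required.

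There is no real obstacle here. The one thing to be careful about is simply that the existence of the terminal nonzero term of the descending central series requires $\frn$ to be nonabelian, which is exactly the hypothesis of the contrapositive; the nilpotency hypothesis guarantees that this term lies in the center. So the proof is essentially a three-line argument invoking only the definitions.
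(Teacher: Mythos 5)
Your proof is correct and follows the same route as the paper: the paper's argument is exactly that the last nonzero term of the descending central series of a non-abelian nilpotent $\frn$ lies in $\zen(\frn)\cap[\frn,\frn]$. You have merely written out the details that the paper leaves implicit.
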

\begin{proof}
If $\frn$ is abelian, then clearly $\frj=\zsp$.
If $\frn$ is not abelian, its descending central series eventually
intersects $\zen(\frn)$ non-trivially, which means $\frj\neq\zsp$.
\end{proof}

\begin{lemma}\label{lem:dim4}
If $\dim\frn\leq 4$, then $\frn$ is abelian.
\end{lemma}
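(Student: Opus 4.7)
\emph{Plan.} I would assume for contradiction that $\frn$ is non-abelian and derive a contradiction from the smallness of $\dim \frn$. By Lemma \ref{lem:abelian} this assumption amounts to $\frj \neq \zsp$, and since $\frj$ is totally isotropic we have $1 \leq \dim \frj \leq \tfrac{1}{2}\dim \frn \leq 2$.

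The one general fact I would pull out of the invariance of $\met$ is the identity $\zen(\frn) = [\frn,\frn]^\perp$, which follows at once from $\langle [x,y], z\rangle = \langle x, [y,z]\rangle$ together with non-degeneracy. It yields the dimension identity
\[
\dim \zen(\frn) + \dim [\frn,\frn] = \dim \frn. \tag{$\star$}
\]

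With $(\star)$ in hand, the quickest route is to appeal to the classification of nilpotent Lie algebras of dimension at most four: the non-abelian ones are the three-dimensional Heisenberg algebra, the Heisenberg algebra with a one-dimensional abelian summand, and the four-dimensional filiform algebra $\frn_{4,1}$. In each of these a direct inspection gives $\dim \zen(\frn) + \dim [\frn,\frn] = \dim\frn - 1$, contradicting $(\star)$.

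If one prefers to avoid citing the classification, the same contradiction comes out of the decomposition $\frn = \frj^* \oplus \frw \oplus \frj$ from \eqref{eq:decomp}. When $\dim \frj = 2$ one has $\frw = \zsp$, and writing out $\langle [a_1,a_2], a_i\rangle = \langle a_1,[a_2,a_i]\rangle$ on a dual basis of $\frj^* \oplus \frj$ immediately forces $[\frj^*, \frj^*] = \zsp$. When $\dim \frj = 1$ one has $\dim \frw \leq 2$; invariance shows that $\ad(a)$ for $a \in \frj^*$ preserves $\frw$ and is skew-symmetric there, while nilpotency of $\frn$ forces it to be a nilpotent element of $\mathfrak{so}(\frw)$. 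Since $\mathfrak{so}$ of any two-dimensional metric space (signature $(2,0)$, $(0,2)$ or $(1,1)$) is one-dimensional with no nonzero nilpotent element, $\ad(a)|_{\frw} = 0$; combined with $[\frj,\frn] = \zsp$ this places $a$ in $\zen(\frn)$, after which a last application of invariance (together with the fact that every two-dimensional nilpotent Lie algebra is abelian) forces $[\frw,\frw] = \zsp$, so that $\frn$ is abelian.

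The only mildly delicate point is the small observation that even the Lorentzian case $\mathfrak{so}(1,1)$ has no nonzero nilpotent element; everything else is essentially bookkeeping inside the decomposition \eqref{eq:decomp}.
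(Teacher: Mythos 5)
Your proposal is correct. Your lead argument is genuinely different from the paper's: you extract from invariance the clean identity $\zen(\frn)=[\frn,\frn]^\perp$, hence $\dim\zen(\frn)+\dim[\frn,\frn]=\dim\frn$, and then rule out the three non-abelian nilpotent Lie algebras of dimension at most four (Heisenberg, Heisenberg plus a line, and the filiform algebra) by checking that each violates this identity. That is a perfectly valid and arguably more transparent route, at the cost of importing the classification; the orthogonality $\zen(\frn)=[\frn,\frn]^\perp$ is also a reusable fact that the paper never states explicitly. Your fallback argument via the decomposition \eqref{eq:decomp} is essentially the paper's proof: the $\dim\frj=2$ case is identical (invariance forces $[\frj^*,\frj^*]\perp\frj^*$ while $[\frj^*,\frj^*]\subseteq\frj$), and in the $\dim\frj=1$ case you and the paper use the same key fact from opposite directions --- you observe that a nilpotent element of $\sso$ of a non-degenerate space of dimension at most two is zero, hence $\ad(\frj^*)|_\frw=0$ and then $[\frw,\frw]=\zsp$; the paper instead notes that $\frj\subseteq[\frn,\frn]$ forces $\ad(\frj^*)|_\frw\neq0$, whence $\ad(\frj^*)|_\frw$ is invertible, contradicting nilpotence. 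The last step of your second route ($[\frw,\frw]=\zsp$) is stated a bit loosely --- the cleanest finish is that once $\frj^*\subseteq\zen(\frn)$ one has $[\frn,\frn]=[\frw,\frw]$, and invariance makes $[w_1,w_2]$ orthogonal to all of $\frn$, hence zero --- but this is a one-line repair, not a gap.
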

\begin{proof}
The claim is trivial for $\dim\frn\leq 2$.
Suppose first that $\dim\frn=3$.
By \eqref{eq:decomp} and Lemma \ref{lem:abelian}, if $\frn$ was not
abelian, then $\dim\frj=\dim\frj^*=\dim\frw=1$. With the invariance of
$\met$, these dimensions imply $\frj=[\frj^*,\frw]\perp\frj^*$,
contradicting the definition of $\frj^*$.

Now assume $\dim\frn=4$ and $\frn$ is not abelian.
If $\dim\frj=2$, then $\frn=\frj^*\oplus\frj$
and invariance of $\met$ implies
$[\frn,\frn]=[\frj^*,\frj^*]\perp\frj^*$, but also $[\frj^*,\frj^*]\subset\frj$, a contradiction.
This leaves the case $\dim\frj=1$. Here, invariance and $\dim\frj^*=1$
imply $[\frj^*,\frw]\subseteq\frw$.
By nilpotence of $\frn$, $[\frw,\frw]\subseteq\frj$, and since $\frj^*$
is dually paired with $\frj$, it follows that $\frj^*$ acts
non-trivially on $\frw$. But invariance then requires
$[\frj^*,\frw]=\frw$, contradicting the nilpotence of $\frn$.
\end{proof}

With regard to Lemma \ref{lem:dimG7}, we are interested in the question
when for non-abelian $\frn$ of dimension seven,
$\ad_{\frn}(\frn)$ is contained
in $\frg_{2(2)}$.
So let us now further assume that $\dim\frn=7$ and that $\frn$
is $k$-step nilpotent.
Since it is possible that $\frn$ is \emph{decomposable}, meaning it
can be further decomposed into an orthogonal direct sum
$\frn=\frn_1\times\frn_0$ of ideals, we are also interested in
non-abelian nilpotent Lie algebras $\frn_1$ of dimension $\dim\frn_1\leq 7$
with an invariant scalar product.

As is evident from Kath's classification \cite[Theorem 4.7]{kath1},
the Examples \ref{ex:kath6a} to \ref{ex:kath8} below are the only non-abelian
nilpotent Lie algebras $\frn_1$ of dimension less than eight with an
invariant scalar product that
cannot be further decomposed into orthogonal direct sums of ideals.
We use the following notation: If $d=\dim\frj$, let
$a_1,\ldots,a_d\in\frj^*$, $w_1,\ldots,w_{n-2d}\in\frw$ and
$z_1,\ldots,z_d\in\frj$ be bases of the respective subspaces in
\eqref{eq:decomp}, such that the $a_i$ and $z_j$ are dual bases to
each other with respect to $\met$.
The number $\epsilon=\pm 1$ depends on whether $p\geq q$ (then
$\epsilon=1$) or $p<q$ (then $\epsilon=-1$).

\begin{example}\label{ex:kath6a}
Let $\dim\frn_1=7$, $\dim\frj=2$, with bracket relations
\begin{gather*}
[a_1,a_2]=w_1,
\quad
[a_1,w_1]=w_2,
\quad
[a_1,w_2]=-\epsilon w_3,
\quad
[a_1,w_3]=-z_2, \\
[a_2,w_1]=0,
\quad
[a_2,w_2]=0,
\quad
[a_2,w_3]=z_1,
\\
[w_1,w_2]=\epsilon z_1,
\quad
[w_1,w_3]=0,
\quad
[w_2,w_3]=0,
\end{gather*}
where the scalar product on $\frw$ is given by
\[
\langle w_1,w_1\rangle=
\langle w_1,w_2\rangle=
\langle w_2,w_3\rangle=
\langle w_3,w_3\rangle=0,\quad
\langle w_1,w_3\rangle=1,\quad
\langle w_2,w_2\rangle=\epsilon.
\]
We denote this Lie algebra by $\frnI$.
Here, $\frnI$ is three-step nilpotent, $\dim\frw=3$, and the algebra
$\ad_{\frn}(\frw)$ and three-dimensional and abelian.
\end{example}

\begin{example}\label{ex:kath7a}
Let $\dim\frn_1=6$, $\dim\frj=3$, with non-zero bracket relations
\[
[a_1,a_2]=z_3,
\quad
[a_2,a_3]=z_1,
\quad
[a_3,a_1]=z_2.
\]
We denote this Lie algebra by $\frnII$.
Here, $\frnII$ is two-step nilpotent and $\frw=\zsp$.
The invariant scalar product has split signature $(3,3)$.
\end{example}

\begin{example}\label{ex:kath8}
Let $\dim\frn_1=5$, $\dim\frj=2$, with non-zero bracket relations
\[
[a_1,a_2]=w,
\quad
[a_1,w]=-\epsilon z_2,
\quad
[a_2,w]=\epsilon z_1
\]
We denote this Lie algebra by $\frnIII$.
Here, $\frnIII$ is three-step nilpotent, $\dim\frw=1$.
The signature of $\frnIII$ is $(3,2)$ or $(2,3)$, depending on
$\epsilon$.
\end{example}

\subsection[Nilpotent subalgebras of $\frg_{2(2)}$]{Nilpotent subalgebras of $\boldsymbol{\frg_{2(2)}}$}

The Lie algebra $\sso(\frn,\met)$ of skew-symmetric linear maps for
$\met$ can be identified with $\sso_{4,3}$.
Since $\met$ is invariant we have
$\ad(\frn)\subset\der(\frn)\cap\sso_{4,3}$.

The exceptional simple Lie algebra $\frg_{2(2)}$ is also a subalgebra
of $\sso_{4,3}$. It is the stabilizer subalgebra in $\sso_{4,3}$ of a
certain three-form $\phi$ on $\frn$.
In a suitable Witt basis of $\frn$ with respect to $\met$, we identify
$\frg_{2(2)}$ with the following Lie subalgebra of $\sso_{4,3}$:
\[
\frg_{2(2)}=
\Biggl\{
\left(\begin{matrix}
{u_7} & {u_9} & {u_{10}} & {u_{12}} & {u_{13}} & {u_{14}} & 0\\
{u_1} & {u_8} & {u_{11}} & \frac{u_{10}}{2} & -\frac{u_{12}}{4} & 0 & -{u_{14}}\\
{u_2} & {u_3} & {u_7}-{u_8} & -\frac{u_{9}}{2} & 0 & \frac{u_{12}}{4} & -{u_{13}}\\
{u_4} & 4{u_2} & -4{u_1} & 0 & \frac{u_{9}}{2} & -\frac{u_{10}}{2} & -{u_{12}}\\
{u_5} & -2{u_4} & 0 & 4{u_1} & {u_8}-{u_7} & -{u_{11}} & -{u_9}\\
{u_6} & 0 & 2{u_4} & -4{u_2} & -{u_3} & -{u_8} & -{u_9}\\
0 & -{u_6} & -{u_5} & -{u_4} & -{u_2} & -{u_1} & -{u_7}
\end{matrix}\right)
\ \Biggl|\
u_1,\ldots,u_{14}\in\RR
\Biggr\}
\]
(this matrix representation is borrowed from Leistner, Nurowski and
Sagerschnig \cite{LNS}, with some adjustments to the parameter
labelling).

Any strictly triangular subalgebra of $\frg_{2(2)}$ is
contained in a maximal triangular subalgebra of $\frg_{2(2)}$.
All maximal triangular subalgebras of $\frg_{2(2)}$ are conjugate
to each other by $\Gtwo$, so we may assume the strictly triangular
subalgebra $\ad(\frn)$ is conjugate to a subalgebra of
\begin{equation}
\frm=
\Biggl\{
\left(\begin{matrix}
0 & 0 & 0 & 0 & 0 & 0 & 0\\
{u_1} & 0 & 0 & 0 & 0 & 0 & 0\\
{u_2} & {u_3} & 0 & 0 & 0 & 0 & 0\\
{u_4} & 4{u_2} & -4{u_1} & 0 & 0 & 0 & 0\\
{u_5} & -2{u_4} & 0 & 4{u_1} & 0 & 0 & 0\\
{u_6} & 0 & 2{u_4} & -4{u_2} & -{u_3} & 0 & 0\\
0 & -{u_6} & -{u_5} & -{u_4} & -{u_2} & -{u_1} & 0
\end{matrix}\right)
\
\Biggl|\
u_1,\ldots,u_6\in\RR
\Biggr\}.
\label{eq:triangular}
\end{equation}
In fact, this subalgebra $\frm$ is a maximal strictly triangular subalgebra
of a Borel subalgebra of $\frg_{2(2)}$, and hence a conjugate of $\ad(\frn)$
is contained in $\frm$.

\begin{lemma}\label{lem:rank2}
Let $A$ be the matrix in \eqref{eq:triangular}.
Suppose $\rk A\leq2$. Then $u_1=0$, and in addition one of the
following holds,
\begin{enumerate}
\item[\rm(a)]
either $u_2=u_4=0$,
\item[\rm(b)]
or $u_2\neq 0, u_4\neq0$ and $u_3=\frac{4u_2^2}{u_4}$, $u_5=-\frac{u_4^2}{2 u_2}$.
\end{enumerate}
\end{lemma}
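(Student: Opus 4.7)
The plan is to use the hypothesis $\rk A\le 2$ in the form that every $3\times 3$ minor of $A$ vanishes, and to derive the claimed conditions from a small number of carefully chosen minors.

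First I would prove $u_1=0$. Looking at the submatrix of \eqref{eq:triangular} on rows $2,4,5$ and columns $1,3,4$, the only nonzero entry of row $2$ inside this block is $u_1$, and the surviving $2\times 2$ block is diagonal with entries $-4u_1$ and $4u_1$. Hence this minor equals $-16u_1^3$, and the rank hypothesis forces $u_1=0$.

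Next, with $u_1=0$ substituted into \eqref{eq:triangular}, I would evaluate two further $3\times 3$ minors that together separate the remaining parameters: rows $3,4,6$ with columns $1,2,4$, and rows $4,5,6$ with columns $1,2,3$. In each case the third column of the chosen block contains a single nonzero entry, sitting in row $6$, so expansion along that column reduces the determinant to a $2\times 2$ block in the parameters $u_2,u_3,u_4,u_5$. The two resulting vanishing conditions read
\[
u_2\,(4u_2^2-u_3u_4)=0 \qquad\text{and}\qquad u_4\,(u_4^2+2u_2u_5)=0.
\]

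The conclusion is then a short case analysis on these two polynomial identities. If $u_2=0$ then the second identity reduces to $u_4^3=0$, so $u_4=0$ and we are in case (a); a symmetric argument handles $u_4=0$. In the remaining regime $u_2,u_4\neq 0$, the two identities solve unambiguously to $u_3=4u_2^2/u_4$ and $u_5=-u_4^2/(2u_2)$, which is case (b). The (mild) obstacle is not any single step but the combinatorial selection of minors: many of the obvious $3\times 3$ minors of $A$ vanish identically for structural reasons coming from the block pattern of $\frm$, so one has to hunt for ones in which the characteristic scalings $\pm 2,\pm 4$ appearing in \eqref{eq:triangular} produce a genuine polynomial relation rather than an identity.
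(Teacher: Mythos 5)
Your proof is correct and takes essentially the same approach as the paper: a direct elementary rank computation on the explicit matrix, with the paper phrasing the conditions via linear independence of selected columns and a column operation, and you phrasing them via vanishing of three well-chosen $3\times 3$ minors ($-16u_1^3$, $-4u_2(4u_2^2-u_3u_4)$, and $-4u_4(u_4^2+2u_2u_5)$, all of which check out). The ensuing case analysis, including the observation that $u_2=0$ forces $u_4=0$ and vice versa, matches the paper's argument.
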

\begin{proof}
If $u_1\neq 0$, then columns one, three, four and six are linearly
independent. So assume henceforth $u_1=0$, that is,
\[
A=
\left(\begin{matrix}
0 & 0 & 0 & 0 & 0 & 0 & 0\\
0 & 0 & 0 & 0 & 0 & 0 & 0\\
{u_2} & {u_3} & 0 & 0 & 0 & 0 & 0\\
{u_4} & 4{u_2} & 0 & 0 & 0 & 0 & 0\\
{u_5} & -2{u_4} & 0 & 0 & 0 & 0 & 0\\
{u_6} & 0 & 2{u_4} & -4{u_2} & -{u_3} & 0 & 0\\
0 & -{u_6} & -{u_5} & -{u_4} & -{u_2} & 0 & 0
\end{matrix}\right).
\]

First, assume $u_2=0$. Then columns one, two, three, four are
linearly independent, unless also $u_4=0$. This is case (a).

Next, assume $u_2\neq0$. If $u_4=0$, we see that columns one, two and
four are linearly independent. So assume also $u_4\neq0$.
Given $\rk A=2$, the subcolumns
$\left(\begin{smallmatrix} u_2\\u_4\\u_5\end{smallmatrix}\right)$
and
$\left(\begin{smallmatrix} u_3\\4u_2\\-2u_4\end{smallmatrix}\right)$
must be linearly dependent. So add $-\frac{u_3}{u_2}$ times the first
column of $A$ to its second column, and obtain the identities
$4 u_2 - \frac{u_3 u_4}{u_2}=0$ and $-2 u_4 - \frac{u_3 u_5}{u_2}=0$.
By solving for $u_3$ and $u_5$ we obtain
\[
u_3 = \frac{4 u_2^2}{u_4},
\quad
u_5 = -\frac{u_4^2}{2u_2}.
\]
Plugging this into $A$ we can verify directly that $\rk A=2$.
This is case (b).
\end{proof}

\begin{remark*}
Note that condition (a) in Lemma \ref{lem:rank2} is necessary but not
sufficient for $\rk A=2$.
Also, the condition $\rk A\leq 2$ already implies $\rk A=2$ for the
non-zero matrices in case (a) or (b).
\end{remark*}

Below, we use the notation $A(u_3,u_5,u_6)$ for matrices of case (a)
in Lemma \ref{lem:rank2}, and $B(u_2,u_4,u_6)$ for matrices of case
(b),
\begin{align*}
A(u_3,u_5,u_6)
&=\left(\begin{matrix}
0 & 0 & 0 & 0 & 0 & 0 & 0\\
0 & 0 & 0 & 0 & 0 & 0 & 0\\
0 & {u_3} & 0 & 0 & 0 & 0 & 0\\
0 & 0 & 0 & 0 & 0 & 0 & 0\\
{u_5} & 0 & 0 & 0 & 0 & 0 & 0\\
{u_6} & 0 & 0 & 0 & -{u_3} & 0 & 0\\
0 & -{u_6} & -{u_5} & 0 & 0 & 0 & 0
\end{matrix}\right), \\
B(u_2,u_4,u_6)
&=\left(\begin{matrix}
0 & 0 & 0 & 0 & 0 & 0 & 0\\
0 & 0 & 0 & 0 & 0 & 0 & 0\\
u_2 & \frac{4 u_2^2}{u_4} & 0 & 0 & 0 & 0 & 0\\
u_4 & 4 u_2 & 0 & 0 & 0 & 0 & 0\\
 -\frac{u_4^2}{2u_2} & -2u_4 & 0 & 0 & 0 & 0 & 0\\
{u_6} & 0 & 2u_4 & -4u_2 & -\frac{4 u_2^2}{u_4} & 0 & 0\\
0 & -{u_6} & \frac{u_4^2}{2u_2} & -u_4 & -u_2 & 0 & 0
\end{matrix}\right),\quad u_2,u_4\neq 0.
\end{align*}

\begin{lemma}\label{lem:no_rank2}
The maximal nilpotent subalgebra $\frm$ of $\frg_{2(2)}$
does not contain a three-dimensional Lie sub\-algebra
$\frb$ consisting only of matrices of rank two (and zero).
\end{lemma}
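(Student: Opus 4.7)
The plan is to prove the stronger statement that no three-dimensional \emph{linear} subspace of $\frm$ is entirely contained in the set of matrices of rank at most two; this rules out any candidate Lie subalgebra $\frb$ without ever using the bracket. By Lemma \ref{lem:rank2}, every rank-$\leq 2$ matrix in $\frm$ has $u_1=0$, so $\frb\subseteq\frm_0:=\{A\in\frm\mid u_1=0\}$, which is five-dimensional. Let $W:=\{A\in\frm\mid u_1=u_2=u_4=0\}$, a three-dimensional subspace of $\frm_0$. The dimension formula inside $\frm_0$ gives $\dim(\frb\cap W)\geq 3+3-5=1$, and I set $\frb':=\frb\cap W$. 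From here I would distinguish the three possibilities $\dim\frb'\in\{1,2,3\}$.

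The two extreme cases are straightforward. If $\dim\frb'=1$, the projection $\pi\colon\frb\to\RR^2$, $A\mapsto(u_2,u_4)$, has one-dimensional kernel $\frb'$ and is therefore surjective, so $\frb$ contains some $A$ with $u_2=1$ and $u_4=0$; such an $A$ satisfies neither case (a) nor case (b) of Lemma \ref{lem:rank2} and must have $\rk A\geq 3$. If $\dim\frb'=3$, then $\frb=W$ contains the matrix $A(1,1,0)$ (in the notation introduced after Lemma \ref{lem:rank2}), whose non-zero columns $1,2,3,5$ equal $e_5$, $e_3$, $-e_7$, $-e_6$ and are linearly independent, giving rank $4$.

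The main obstacle is the case $\dim\frb'=2$. Pick $X\in\frb\setminus\frb'$ with parameters $a:=u_2^X$, $b:=u_4^X$, $c:=u_3^X$, $d:=u_5^X$; then $(a,b)\neq(0,0)$. If only one of $a,b$ is non-zero, say $b=0$ and $a\neq 0$, then $X$ itself has $u_2\neq 0$ and $u_4=0$, contradicting Lemma \ref{lem:rank2}; hence $a,b\neq 0$. Then for every $Y\in\frb'$ and every $t\neq 0$, the element $Y+tX\in\frb$ has $u_2=ta\neq 0$ and $u_4=tb\neq 0$, so it falls under case (b) and in particular satisfies
\begin{equation*}
u_3u_4=4u_2^2\quad\text{and}\quad 2u_2u_5+u_4^2=0.
\end{equation*}
Substituting $u_3=u_3^Y+tc$ and $u_5=u_5^Y+td$, dividing by $t$, and letting $t\to 0$ yields $b\,u_3^Y=0$ and $2a\,u_5^Y=0$, so $u_3^Y=u_5^Y=0$ for every $Y\in\frb'$. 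But then $\frb'$ lies in the one-dimensional subspace where only $u_6$ can be non-zero, contradicting $\dim\frb'=2$.

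As each of the three cases is impossible, no such subalgebra $\frb$ exists.
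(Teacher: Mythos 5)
Your proof is correct. Like the paper, you prove the stronger linear statement (no three-dimensional subspace of $\frm$ consists of rank-$\leq 2$ matrices, with no use of the bracket), and both arguments rest entirely on Lemma \ref{lem:rank2}; but your case decomposition is genuinely different. The paper splits according to whether $\frb$ contains a type-(b) matrix, and in that case computes the sum $B_1+B_2$ of two type-(b) elements explicitly, extracts the relation $u_2v_4-u_4v_2=0$ from the rank condition, and shows all type-(b) elements of $\frb$ lie in a two-dimensional subspace, so that a third basis vector is forced to violate the classification of Lemma \ref{lem:rank2}. You instead stratify by $d=\dim(\frb\cap W)$, where $W$ is the type-(a) subspace $\{u_1=u_2=u_4=0\}$: the cases $d=1$ and $d=3$ die immediately on explicit witnesses ($(u_2,u_4)=(1,0)$, respectively $A(1,1,0)$ of rank $4$), and in the case $d=2$ your perturbation argument --- imposing the case-(b) identities $u_3u_4=4u_2^2$ and $2u_2u_5+u_4^2=0$ on $Y+tX$ and extracting the linear-in-$t$ coefficient --- replaces the paper's explicit solving of the sum equations with a cleaner polynomial-identity computation. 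Both routes are comparable in length; yours makes the underlying mechanism (the type-(b) locus is a nonlinear surface, so linear subspaces can only graze it) somewhat more transparent, while the paper's is more hands-on with the matrices. I verified your rank computations ($A(1,1,0)$ has independent columns $e_5,e_3,-e_7,-e_6$) and the limit computation ($bu_3^Y=0$ and $2au_5^Y=0$ with $a,b\neq0$); everything checks out.
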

\begin{proof}
Rank two matrices in $\frm$ belong to one of the two types described
in Lemma \ref{lem:rank2}.
First, assume $\frb$ contains only matrices of the form
$A(u_3,u_5,u_6)$.
In order to be of rank two, one of the parameters $u_3,u_5,u_6$
must be zero in each element of $\frb$.
But the set of matrices with this property is a finite union
of two-dimensional subspaces, contradicting $\dim\frb=3$.

So assume $\frb$ contains a non-zero matrix $B(u_2,u_4,u_6)$.
If $\frb$ contains two elements $B_1=B(u_2,u_4,u_6)$, $B_2=B(v_2,v_4,v_6)$
that are not multiples of each other, then one of
$u_2+v_2\neq 0$ or $u_4+v_4\neq 0$ holds.
The sum $B_{12}=B_1+B_2$ is
\[
B_{12}=
\begin{pmatrix}
0 & 0 & 0 & 0 & 0 & 0 & 0\\
0 & 0 & 0 & 0 & 0 & 0 & 0\\
{u_2}+{v_2} & \frac{4u_2^2}{u_4}+\frac{4v_2^2}{v_4} & 0 & 0 & 0 & 0 & 0\\
u_4+v_4 & 4 u_2+4 v_2 & 0 & 0 & 0 & 0 & 0\\
-\frac{u_4^2}{2u_2}-\frac{v_4^2}{2v_2} & -2u_4-2v_4 & 0 & 0 & 0 & 0 & 0\\
u_6+v_6 & 0 & 2u_4+2v_4 & -4u_2-4v_2 & -\frac{4u_2^2}{u_4}-\frac{4v_2^2}{v_4} & 0 & 0\\
0 & -u_6-v_6 & \frac{u_4^2}{2u_2}+\frac{v_4^2}{2v_2} & -u_4-v_4 & -u_2-v_2 & 0 & 0
\end{pmatrix}.
\]
If $\rk B_{12}=2$, then, since one of $u_2+v_2$ and $u_4+v_4$ is non-zero,
both must be non-zero ($B_{12}$ is case (b) of Lemma
\ref{lem:rank2}). Set $w_2=u_2+v_2$, $w_4=u_2+v_4$.
By Lemma \ref{lem:rank2}, entry $(3,2)$ of $B_{12}$ must be
$w_3 = \frac{4 w_2^2}{w_4}=4\frac{u_2^2+ 2u_2u_4 + u_4^2}{u_4+v_4}$.
But this entry is $\frac{4 u_2^2}{u_4}+\frac{4 v_2^2}{v_4}$, and the
resulting equation
$4\frac{u_2^2+ 2u_2u_4 + u_4^2}{u_4+v_4}
=
\frac{4 u_2^2}{u_4}+\frac{4 v_2^2}{v_4}$
simplifies to $u_2 v_4 - u_4 v_2 = 0$.
Solving for $v_2$ and plugging back into
$B(v_2,v_4,v_6)$ shows that
\[
B_1-\frac{u_4}{v_4}B_2=A\Bigl(0,0,u_6-\frac{u_4}{v_4}v_6\Bigr).
\]
If this is zero, then $B_1$ is a multiple of $B_2$, in contra\-diction
to our assumption.

So $\frb$ contains $B_1$ and $A(0,0,1)$ as basis
elements, and all other elements $B(v_2,v_4,v_6)\in\frb$ lie in
the two-dimensional subspace $U$ spanned by those two elements.
Since $\dim\frb=3$, there exists a third basis vector of $\frb$
of the form $B(u_2,u_4,u_6)+A(w_3,w_5,0)$ for some
$u_2,u_4\neq0$, and at least one of $w_3$ or $w_5$ is non-zero,
for otherwise the element lies in the subspace $U$.
But then the resulting matrix cannot be of rank two, as it is neither
of type (a) nor type (b) in Lemma \ref{lem:rank2}. In fact,
it is not of type (a) since $u_2\neq0$, and for type (b), the entries
$u_3$, $u_5$ are functions of $u_2$, $u_4$, which prohibits
adding a non-zero term $w_3$ or $w_5$.

In conclusion, a Lie subalgebra with the properties required for
$\frb$ does not exist in $\frm$.
\end{proof}

\begin{lemma}\label{lem:no_adn_embedding}
Let $\frn$ be a seven-dimensional two-step nilpotent Lie
algebra with an invariant scalar product of index three.
Then $\ad(\frn)$ is not contained in $\frg_{2(2)}$.
\end{lemma}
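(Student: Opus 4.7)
The plan is to identify $\frn$ explicitly as $\frnII\oplus\RR$ and then contradict the embedding using the rank structure of Lemma~\ref{lem:no_rank2}.

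First I would observe that for a two-step nilpotent $\frn$, $[\frn,\frn]\subseteq\zen(\frn)$, while invariance of $\met$ gives $\zen(\frn)=[\frn,\frn]^\perp$, so $[\frn,\frn]=\frj$ is totally isotropic; in particular $d=\dim\frj\leq 3$ by the signature constraint. Invoking Kath's classification (as cited before Example~\ref{ex:kath6a}), $\frn$ decomposes orthogonally into abelian summands and indecomposable factors drawn from $\{\frnI,\frnII,\frnIII\}$. Only $\frnII$ among these is two-step, and it has dimension six; since $\frn$ is seven-dimensional and non-abelian, the unique possibility is $\frn\cong\frnII\oplus\RR$ with the line carrying a positive-definite form.

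Next I would compute $\ad(\frn)$ from the presentation of $\frnII$ in Example~\ref{ex:kath7a}. Since $\zen(\frn)=\frj^\perp$ is four-dimensional and $\frn$ is two-step, the adjoint image $\ad(\frn)\cong\frn/\zen(\frn)$ is three-dimensional and abelian. For $X=\sum c_i a_i\in\frj^*\setminus\{0\}$, the map $\ad(X)\colon\frj^*\to\frj$ has kernel $\RR X$ and two-dimensional image (controlled by the volume form on $\frj^*$), while everything in $\frw\oplus\frj=\zen(\frn)$ contributes nothing; hence $\rk\ad(X)=2$ for every nonzero $X$. Because $\ad(\frn)$ is strictly triangular, the discussion preceding Lemma~\ref{lem:rank2} places a $\Gtwo$-conjugate of $\ad(\frn)$ inside $\frm$, and conjugation preserves rank, so we obtain a three-dimensional Lie subalgebra of $\frm$ whose nonzero elements all have rank two, in direct contradiction to Lemma~\ref{lem:no_rank2}.

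The main obstacle is the identification $\frn\cong\frnII\oplus\RR$. A self-contained alternative to Kath's classification uses the totally alternating trilinear form $T(X,Y,Z)=\langle[X,Y],Z\rangle$ on $\frn$: because $[\frn,\frn]\subseteq\frj$, $\frj$ is isotropic, and $\frj\perp\frw$, a direct check shows that the only possibly nonzero bracket is the map $\frj^*\times\frj^*\to\frj$ encoded by the restriction $T|_{\Lambda^3\frj^*}$. For $d\leq 2$ this restriction vanishes and $\frn$ would be abelian, contradicting the two-step assumption; for $d=3$ it is proportional to the volume form on $\frj^*$, yielding $\frnII\oplus\RR$ directly.
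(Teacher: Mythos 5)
Your proof is correct and follows essentially the same route as the paper: both arguments reduce to showing that $\ad(\frn)$ would be a three-dimensional abelian subalgebra of $\frm$ all of whose nonzero elements have rank two, which Lemma~\ref{lem:no_rank2} forbids. The only cosmetic difference is that you first pin down $\frn\cong\frnII\oplus\RR$ (via Kath's classification or your volume-form argument), whereas the paper skips this identification and derives $\dim\frj=3$ and $\im\ad(x)=x^\perp\cap\frj$ (hence $\rk\ad(x)=2$) directly from the invariance of $\met$.
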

\begin{proof}
The invariance of $\met$ and two-step nilpotence of $\frn$
imply $\zen(\frn)=\frw\oplus\frj$ and
$[\frn,\frn]=[\frj^*,\frj^*]=\frj$.
In particular, $\dim\ad(\frn)=\dim\frj^*=\dim\frj$.
By assumption, $\frj\neq\zsp$, hence $\dim\frj=3$.
In fact, if $\dim\frj^*\leq2$, then $[\frj^*,\frj^*]=\zsp$ due to the
invariance of $\met$, a contradiction.
Also by invariance, $\im\ad(x)=x^\perp\cap\frj$ for all non-zero
$x\in\frj^*$.
So $\rk\ad(x)=2$ for all non-zero $x\in\frj^*$.

If $\ad(\frn)$ is contained in $\frg_{2(2)}$, then it is a
nilpotent triangular sub\-algebra, and as such contained in a maximal
nilpotent triangular sub\-algebra.
This means $\ad(\frn)$ is conjugate to a three-dimensional
abelian subalgebra of $\frm$ in \eqref{eq:triangular} consisting of
rank two matrices (and zero).
But by Lemma \ref{lem:no_rank2}, such a subalgebra cannot exist
in $\frm$.
\end{proof}

%

\begin{lemma}\label{lem:no_embedding}
Let $\frn$ be a seven-dimensional nilpotent but non-abelian Lie algebra
with an invariant scalar product of index three.
Then $\ad(\frn)$ is not contained in $\frg_{2(2)}$.
\end{lemma}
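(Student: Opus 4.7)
My plan is to use Kath's classification (cited earlier in this section) to reduce to a short list of candidate structures for $\frn$, and then to apply Lemma~\ref{lem:no_rank2} in each remaining case. By that classification together with Lemma~\ref{lem:dim4}, every non-abelian seven-dimensional nilpotent metric Lie algebra decomposes as an orthogonal direct sum $\frn = \frn_1 \oplus \frn_0$, where $\frn_0$ is an abelian metric ideal and $\frn_1$ is a single indecomposable non-abelian factor from $\{\frnI, \frnII, \frnIII\}$; only one non-abelian summand can occur because the smallest of these has dimension $5$. The three cases to consider are therefore $\frn = \frnI$ (with $\frn_0 = 0$), $\frn = \frnII \oplus \frn_0$ with $\dim\frn_0 = 1$, and $\frn = \frnIII \oplus \frn_0$ with $\dim\frn_0 = 2$, the abelian factor carrying whatever signature is needed to make $\frn$ of total signature $(4,3)$.

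Since $\frnII$ is two-step nilpotent, the middle case is two-step and is disposed of by Lemma~\ref{lem:no_adn_embedding}. For the remaining two cases the strategy is uniform: I identify inside $\ad(\frn)$ a three-dimensional Lie subalgebra every nonzero element of which has rank $2$. Combined with the observation (used already in the proof of Lemma~\ref{lem:no_adn_embedding}) that any nilpotent subalgebra of $\frg_{2(2)}$ is, after conjugation, contained in the maximal triangular subalgebra $\frm$ of \eqref{eq:triangular}, Lemma~\ref{lem:no_rank2} then rules out $\ad(\frn) \subseteq \frg_{2(2)}$.

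In the case $\frn = \frnIII \oplus \frn_0$ the abelian factor is central in $\frn$, so $\ad(\frn) = \ad_\frn(\frnIII)$ is three-dimensional with basis $\ad(a_1), \ad(a_2), \ad(w)$; from Example~\ref{ex:kath8} it is isomorphic to the Heisenberg algebra. A direct check shows that every such operator factors through the projection $\frn \to \Span\{w, z_1, z_2\}$, that the induced $3\times 3$ matrix has determinant zero, and that it is nonzero whenever the linear combination is; hence every nonzero element has rank exactly $2$ and Lemma~\ref{lem:no_rank2} applies.

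In the case $\frn = \frnI$ the natural candidate is $\ad(\frw) = \Span\{\ad(w_1), \ad(w_2), \ad(w_3)\}$, a three-dimensional abelian subalgebra of $\ad(\frnI)$ since $w_1, w_2, w_3$ pairwise commute modulo the central ideal $\frj$ (see Example~\ref{ex:kath6a}). A short computation shows that a nonzero combination $\alpha\ad(w_1)+\beta\ad(w_2)+\gamma\ad(w_3)$ sends $a_1$ to a nonzero vector in $\frw \oplus \RR z_2$, sends each of $a_2, w_1, w_2$ into $\RR z_1$ with at least one of these $z_1$-components nonzero, and vanishes on $w_3, z_1, z_2$; hence the image is two-dimensional. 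Lemma~\ref{lem:no_rank2} again yields the required contradiction. The main structural step is the reduction via Kath's classification; the remaining rank verifications are short and case-specific.
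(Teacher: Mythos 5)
Your proposal is correct and follows essentially the same route as the paper: reduce via Kath's classification to the three cases $\frnI$, $\frnII\oplus\RR$, $\frnIII\oplus\RR^2_1$, dispose of the two-step case by Lemma~\ref{lem:no_adn_embedding}, and in the other two cases exhibit a three-dimensional subalgebra of rank-two operators to invoke Lemma~\ref{lem:no_rank2}. The only blemish is in the $\frnIII$ case, where ``determinant zero and nonzero'' only gives rank $1$ or $2$; you should note explicitly (as your rank computations in fact show) that rank $1$ cannot occur, since $\ad(a_1),\ad(a_2)$ hit $w$ while $\ad(w)$ lands in $\Span\{z_1,z_2\}$.
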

\begin{proof}
If $\ad(\frn)$ is contained in $\frg_{2(2)}$, then it is a
nilpotent triangular sub\-algebra, and as such contained in a maximal
nilpotent triangular sub\-algebra. This means $\ad(\frn)$ is conjugate
to a subalgebra of the maximal nilpotent subalgebra $\frm$ of
$\frg_{2(2)}$ given by \eqref{eq:triangular}.
The possibilities for $\frn$ are covered by the examples in Section \ref{sec:liealgebra}, up to some abelian factor.
We use the notation from these examples.

Suppose first that $\frn$ is indecomposable. Then $\frn$ must be the
Lie algebra $\frnI$ from Example \ref{ex:kath6a}.
Now $\ad_{\frnI}(\frw)$ is a three-dimensional abelian subalgebra
of $\ad_{\frnI}(\frnI)$, generated by the elements $\ad(w_i)$,
$i=1,2,3$.
From the commutator relations of $\frnI$ it follows that the image of
every non-zero $Q\in\ad_{\frnI}(\frw)$ is the span of $z_1$ and
$Q a_1$. Hence $\rk Q=2$ for all non-zero $Q$. Now Lemma
\ref{lem:no_rank2} tells us that such a subalgebra does not exist in
$\frm$ (alternatively, this case could be excluded by
comparison with Kath's classification result \cite[Theorem 6.8]{kath2}).

Suppose now that $\frn$ is decomposable, that is, an orthogonal direct
sum $\frg=\frn_1\times\frn_0$ of non-zero metric Lie algebras. We may
assume $\dim\frn_1>\dim\frn_0$. Then $1\leq\dim\frn_0\leq 4$, so that
$\frn_0$ is abelian by Lemma \ref{lem:dim4}.
If $\frn_1$ is not abelian itself, then $5\leq\dim\frn_1\leq 6$.
This leaves $\frnII$ and $\frnIII$ from Examples \ref{ex:kath7a} and
\ref{ex:kath8} as possibilities for $\frn_1$.
Since $\frnII\times\RR$ is two-step nilpotent, its adjoint
representation cannot be contained in $\frg_{2(2)}$ by Lemma
\ref{lem:no_adn_embedding}.

So assume $\frn=\frnIII\times\RR^2_1$.
By the relations in Example \ref{ex:kath8}, the algebra
$\ad(\frn)$ is three-dimensional, spanned by $\ad(a_1)$,
$\ad(a_2)$ and $\ad(w)$.
Furthermore, it follows that the image of a linear combination
$Q=\alpha_1\ad(a_1)+\alpha_2\ad(a_2)+\beta\ad(w)$ equals
$\Span\{\alpha_1 w - \beta\epsilon z_1, \alpha_2\epsilon z_1-\alpha_1\epsilon z_2\}$.
So $\rk Q=2$ for all non-zero $Q\in\frb$.
We can now apply Lemma
\ref{lem:no_rank2} to conclude that $\ad(\frn)$ is not
contained in $\frg_{2(2)}$.
\end{proof}

%
%
%
%



\section{Proofs of the main theorems}
\label{sec:proofs}

\begin{proof}[Theorem \ref{mthm:biinvariant}]
In this situation, Lemma \ref{lem:dimG7} applies.
This means $G$ is nilpotent, $\dim G=7$ and $\ad(\frg)$ is a subalgebra
of $\frg_{2(2)}$. But by Lemma \ref{lem:no_embedding}, this is not possible unless $\frg$ and hence $G$ is abelian.
Since $G$ acts effectively on $M=G/\,\Gamma$, the lattice $\Gamma$ must
be trivial, which implies that $G$ itself is compact.
This means $G=M$ is a torus $\RR^7_3/\ZZ^7$.
\end{proof}

\begin{proof}[Theorem \ref{mthm:flat}]
By Lemma \ref{lem:Levi_noncompact}, the group $G$ has dimension seven
and $M=G/\,\Gamma$ for some uniform lattice $\Gamma$ in $G$, and the
metric $\g_M$ on $M$ is induced by a bi-invariant metric $\g_G$ on $G$.
Now we are in the situation of Theorem \ref{mthm:biinvariant}, which
concludes the proof.
\end{proof}






\end{document}